\newtheorem{thm}{Theorem}[section]
\newtheorem{cor}[thm]{Corollary}
\newtheorem{lem}[thm]{Lemma}
\newtheorem{prop}[thm]{Proposition}
\newtheorem*{nonumthm}{Theorem} %Unnumbered theorem
\theoremstyle{definition}
\newtheorem{example}[thm]{Example}
\newtheorem*{ques}{Question} %Unnumbered question
\numberwithin{equation}{section}
\renewcommand{\phi}{\varphi}
\newcommand{\eps}{\epsilon}
\renewcommand{\bar}{\overline}
\newcommand{\N}{\mathbb{N}}
\newcommand{\R}{\mathbb{R}}
\newcommand{\LF}{\mathcal{F}}
\newcommand{\LP}{\mathcal{P}}
\newcommand{\LB}{\mathcal{B}}
\newcommand{\LS}{\mathcal{S}}
\newcommand{\LI}{\mathcal{I}}
\DeclareMathOperator{\ran}{ran}
\newcommand{\LH}{\mathcal{H}}
\newcommand{\LK}{\mathcal{K}}
\newcommand{\rank}{\mathrm{rank}}
\newcommand{\ess}{\mathrm{ess}}
\DeclareMathOperator{\linspan}{span}
\begin{document}

%%%%% To ease editing, for IMPAN journals add:

%\baselineskip=17pt

%%%%%%%%%%%%%%%%

\title{Borel equivalence relations in the space of bounded operators}

\author{Iian B. Smythe\\
Department of Mathematics\\ 
Cornell University\\
Ithaca, NY, USA, 14853\\
E-mail: ismythe@math.cornell.edu}

\date{September 17, 2016}

\maketitle

%% Classification and key words; note that the 2010 classification is used:

\renewcommand{\thefootnote}{}

\footnote{2010 \emph{Mathematics Subject Classification}: Primary 03E15, 47B10, Secondary 46A45.}

\footnote{\emph{Key words and phrases}: Borel equivalence relations, turbulence, compact operators, Calkin algebra, Schatten $p$-class, projection operators.}

\renewcommand{\thefootnote}{\arabic{footnote}}
\setcounter{footnote}{0}

%%%%%%%%

\begin{abstract}
	We consider various notions of equivalence in the space of bounded operators on a Hilbert space, in particular modulo finite rank, modulo Schatten $p$-class, and modulo compact. Using Hjorth's theory of turbulence, the latter two are shown to be not classifiable by countable structures, while the first is not reducible to the orbit equivalence relation of any Polish group action. The results for modulo finite rank and modulo compact operators are also shown for the restrictions of these equivalence relations to the space of projection operators.
\end{abstract}

\section{Introduction}

A fundamental problem in the theory of operators on an infinite dimensional separable complex Hilbert space is to classify a collection of operators up to some notion of equivalence, a classical example being the following:

\begin{nonumthm}[Weyl--von Neumann \cite{vN35}]
	For $T$ and $S$ bounded self-adjoint operators on a Hilbert space as above, the following are equivalent:
	\begin{enumerate}[\upshape (i)]
		\item $T$ and $S$ are \emph{unitarily equivalent modulo compact}, i.e., there is a compact operator $K$ and a unitary operator $U$ such that $UTU^*-S=K$.
		\item $T$ and $S$ have the same \emph{essential spectrum}.
	\end{enumerate}
\end{nonumthm}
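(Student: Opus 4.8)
The plan is to prove the two implications separately; \textrm{(i)}$\Rightarrow$\textrm{(ii)} is routine while \textrm{(ii)}$\Rightarrow$\textrm{(i)} carries all the content. For \textrm{(i)}$\Rightarrow$\textrm{(ii)}, I would work in the Calkin algebra $\LB(H)/\LK(H)$, using the standard fact that the essential spectrum $\sigma_{\ess}(A)$ of a self-adjoint $A$ is exactly the spectrum of its image $q(A)$ under the quotient map $q\colon \LB(H)\to \LB(H)/\LK(H)$. If $UTU^*-S=K$ with $K$ compact and $U$ unitary, then $q(S)=q(U)q(T)q(U)^*$, since $q(K)=0$; as $q(U)$ is a unitary in the Calkin algebra, this inner automorphism preserves spectra, whence $\sigma_{\ess}(S)=\sigma_{\ess}(T)$.

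For \textrm{(ii)}$\Rightarrow$\textrm{(i)}, the engine is the Weyl--von Neumann diagonalization theorem: every bounded self-adjoint operator can be written as $D+K$ with $D$ diagonal in some orthonormal basis and $K$ compact (indeed with $\|K\|$ arbitrarily small). I would prove this by the classical inductive construction, using the spectral theorem to approximate the spectral projections of $T$ by finite-rank projections and peeling off finite-dimensional diagonal blocks while controlling the accumulated error. Applying this to $T$ and $S$ gives $T=D_1+K_1$ and $S=D_2+K_2$ with $D_i$ diagonal and $K_i$ compact; by the easy direction, $\sigma_{\ess}(D_1)=\sigma_{\ess}(T)=\sigma_{\ess}(S)=\sigma_{\ess}(D_2)$. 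Since $UTU^*-S=(UD_1U^*-D_2)+(UK_1U^*-K_2)$ and the second summand is automatically compact, it suffices to produce a unitary $U$ exhibiting the unitary equivalence modulo compact of the two diagonal operators.

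This reduces everything to a combinatorial lemma about sequences. Writing $D_1=\mathrm{diag}(a_n)$ in a basis $(e_n)$ and $D_2=\mathrm{diag}(b_n)$ in a basis $(f_n)$, the common essential spectrum $E$ is precisely the set of \emph{essential accumulation values}, i.e., those $\lambda$ with $|d_n-\lambda|<\eps$ for infinitely many $n$, for every $\eps>0$. The key claim is that if $(a_n)$ and $(b_n)$ have the same such set $E$, there is a bijection $\pi\colon\N\to\N$ with $a_{\pi(n)}-b_n\to 0$; the unitary determined by $Ue_{\pi(n)}=f_n$ then satisfies $UD_1U^*-D_2=\mathrm{diag}(a_{\pi(n)}-b_n)$, a diagonal operator whose entries tend to $0$ and is therefore compact, completing the proof. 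Constructing $\pi$ is the main obstacle and the real heart of the theorem. I would build it by a back-and-forth scheme: for each $\eps>0$ only finitely many entries of either sequence lie at distance $\geq\eps$ from $E$ (else those entries would accumulate at a point of $E$ lying at distance $\geq\eps$ from $E$, a contradiction), while near each point of $E$ both sequences have infinitely many entries; so at each stage one can match the least so-far-unmatched index on each side to an unused index on the other lying within the current (shrinking) tolerance, the infinite supply guaranteeing the process never stalls. The delicate points are ensuring that deferred indices are eventually used so that $\pi$ is a genuine bijection, and that the finitely many unavoidably large matched differences coming from entries far from $E$ do not obstruct compactness --- which they do not, since a diagonal operator with all but finitely many entries small is compact.
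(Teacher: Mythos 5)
The statement you were asked to prove is one the paper never proves: the Weyl--von Neumann theorem appears there only as classical background, with a citation to von Neumann, so there is no in-paper argument to compare against; your proposal must be judged on its own merits. On those merits it is correct in outline, and it is in fact the standard classical proof. The direction (i)$\Rightarrow$(ii) via the Calkin algebra is right: $\sigma_\ess(A)$ is the spectrum of the image of $A$ under the quotient map $\LB(H)\to\LB(H)/\LK(H)$, images of unitaries are Calkin unitaries, and inner automorphisms preserve spectrum. For (ii)$\Rightarrow$(i), reducing to diagonal operators via the diagonalization theorem $T=D+K$ and then matching eigenvalue sequences is the classical route, and your combinatorial lemma is true with the proof you sketch: boundedness forces, for each $\eps>0$, all but finitely many entries to lie within $\eps$ of $E$ (your accumulation-point contradiction is the right argument), hence $\mathrm{dist}(b_n,E)\to 0$; matching each $b_n$ to an unused $a$-index within $\mathrm{dist}(b_n,E)$ plus a stagewise-vanishing tolerance (possible since each point of $E$ attracts infinitely many $a_m$), alternating sides so every index on both sides is eventually used, gives a bijection $\pi$ with $a_{\pi(n)}-b_n\to 0$, and then $UD_1U^*-D_2$ is diagonal with entries tending to $0$, hence compact --- exactly the criterion recorded as Proposition~\ref{diag_cpct} in the paper. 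Two caveats are worth flagging. First, your phrase ``within the current (shrinking) tolerance'' is slightly misleading: entries far from $E$ cannot be matched within an arbitrarily small tolerance, only within roughly their distance to $E$; the argument survives because that distance itself tends to $0$, which is what you implicitly use when dismissing the ``finitely many unavoidably large'' differences (per threshold $\delta$, finitely many --- which is precisely the statement that the diagonal entries tend to $0$). Second, and more substantively, the Weyl--von Neumann diagonalization theorem that you invoke as ``the engine'' is itself the real analytic content of the entire result, and your one-sentence description of its inductive construction (finite-rank almost-reducing projections built from spectral projections, with error control) is the correct skeleton but not a proof; as written, the weight of your proposal rests on a quoted theorem rather than on an argument you have supplied.
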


That is, bounded self-adjoint operators are completely classified up to unitary equivalence modulo compact by their essential spectra.

The modern theory of Borel equivalence relations affords us a general framework for such results. Given a space $X$ of objects, and an equivalence relation $E$ on $X$, completely classifying the elements of $X$ up to $E$-equivalence amounts to finding another space $Y$ with equivalence relation $F$, and specifying a map $f:X\to Y$ such that
\[
	xEy \quad\Leftrightarrow\quad f(x)Ff(y),
\]
for all $x,y\in X$. The spaces and equivalence relations should be ``reasonably definable'', in the sense that the former are Polish (or standard Borel) and the latter Borel. Enforcing that the classifying map $f$ is Borel captures that idea that $f$ is ``computing'' an invariant for the objects in $X$. Such a map is called a Borel reduction of $E$ to $F$, and its existence or non-existence allows us to compare the complexity of such equivalence relations. The ``simplest'' Borel equivalence relations are those given by equality on Polish spaces, and are said to be \emph{smooth}.

Recasting the motivating problem in this setting requires specifying a Polish or Borel structure on the collection of operators of interest, verifying that the notion of equivalence is Borel, and reducing the equivalence relation to another, preferably well-understood, equivalence relation. In the setting of the Weyl--von Neumann theorem above, we have:

\begin{nonumthm}[Ando--Matsuzawa \cite{MR3427601}]
	The map $T\mapsto\sigma_\ess(T)$ is a Borel function from the space of bounded self-adjoint operators to the Effros Borel space of closed subsets of $\R$. In particular, unitary equivalence modulo compact of bounded self-adjoint operators is smooth.
\end{nonumthm}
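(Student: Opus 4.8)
\emph{The plan.} I fix a standard Borel structure on the space $\LB(H)_\sa$ of bounded self-adjoint operators, say the one generated by the maps $T\mapsto\langle Tx,y\rangle$ for $x,y$ in a fixed countable dense set, under which $\LB(H)_\sa=\bigcup_n\{T:\|T\|\le n\}$ is a countable union of Polish spaces. Recall that a map into the Effros Borel space $F(\R)$ of closed subsets of $\R$ is Borel iff the preimage of each set $\{C:C\cap U\neq\emptyset\}$ is Borel, and that it suffices to check this for $U$ ranging over open intervals $(a,b)$ with rational endpoints. The first reduction is the identity
\[
	\{T:\sigma_\ess(T)\cap(a,b)\neq\emptyset\}=\bigcup_{\substack{c,d\in\Q\\ a<c<d<b}}\{T:\sigma_\ess(T)\cap[c,d]\neq\emptyset\},
\]
whose forward inclusion comes from choosing rationals $a<c<\lambda<d<b$ around a point $\lambda\in\sigma_\ess(T)\cap(a,b)$. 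So it suffices to prove that $\{T:\sigma_\ess(T)\cap[c,d]\neq\emptyset\}$ is Borel for each fixed rational $c<d$.

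\emph{A functional-calculus criterion.} Set $g_{c,d}(\lambda)=\max\{\lambda-d,\,0,\,c-\lambda\}=\mathrm{dist}(\lambda,[c,d])$ and $S=g_{c,d}(T)\ge 0$. Writing $E^T$ for the spectral measure of $T$, the identity $g_{c,d}^{-1}\big([0,\epsilon)\big)=(c-\epsilon,d+\epsilon)$ gives, via the spectral theorem, that $0\in\sigma_\ess(S)$ iff $\rank E^T\big((c-\epsilon,d+\epsilon)\big)=\infty$ for all $\epsilon>0$, which is exactly the condition $\sigma_\ess(T)\cap[c,d]\neq\emptyset$. I would then record the standard (finitary) Weyl criterion for a positive operator: for $S\ge 0$, one has $0\in\sigma_\ess(S)$ iff for every $N$ and every $\epsilon>0$ there exist orthonormal $x_1,\dots,x_N$ with $\|Sx_i\|<\epsilon$. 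The forward direction picks $N$ orthonormal vectors in the infinite-dimensional range of $E^{S}\big([0,\epsilon)\big)$; the converse is the observation that if $0\notin\sigma_\ess(S)$ then $E^{S}\big([0,\delta)\big)$ has some finite rank $p$, and any unit vector with $\|Sx\|\le\epsilon<\delta$ lies within $\epsilon/\delta$ of the corresponding $p$-dimensional space, so more than $p$ nearly orthonormal such vectors cannot exist once $\epsilon$ is small. Combining, $\sigma_\ess(T)\cap[c,d]\neq\emptyset$ iff for all $N$ and all rational $\epsilon>0$ there are orthonormal $x_1,\dots,x_N$ with $\|g_{c,d}(T)x_i\|<\epsilon$.

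\emph{Borel measurability.} Two measurability facts drive the conclusion. First, for fixed $x$ the map $T\mapsto\|p(T)x\|$ is Borel for every polynomial $p$, since $\langle p(T)^*p(T)x,x\rangle$ is a polynomial in the weakly continuous functions $T\mapsto\langle T^kx,x\rangle$ (using $T^*=T$); approximating $g_{c,d}$ uniformly by polynomials on each interval $[-R,R]$ and using $\|g_{c,d}(T)-p(T)\|\le\sup_{[-\|T\|,\|T\|]}|g_{c,d}-p|$ shows $(T,x)\mapsto\|g_{c,d}(T)x\|$ is Borel in $T$ and continuous in $x$, hence jointly Borel. Second, I eliminate the quantifier over $H^N$: for fixed $T$ the predicate in Step 2 is an open condition on $(x_1,\dots,x_N)$, so it may be witnessed by vectors with rational coordinates relative to a fixed orthonormal basis, and a Gram--Schmidt perturbation estimate lets me replace exact orthonormality by $|\langle x_i,x_j\rangle-\delta_{ij}|<\eta$ with a rational slack $\eta$. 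This turns ``$\exists x_1,\dots,x_N\in H$'' into a countable disjunction of Borel conditions, so each $\{T:\sigma_\ess(T)\cap[c,d]\neq\emptyset\}$ is a countable intersection (over $N,\epsilon$) of Borel sets, hence Borel. Taking the countable union over $(c,d)$ and then over a base of open sets shows $T\mapsto\sigma_\ess(T)$ is Borel. I expect this quantifier-elimination-plus-functional-calculus step to be the main obstacle, as it is where the nonseparable-looking search over vectors and the non-polynomial cutoff $g_{c,d}$ must both be tamed within the chosen Borel structure.

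\emph{Smoothness.} Finally, the Weyl--von Neumann theorem says $T$ and $S$ are unitarily equivalent modulo compact iff $\sigma_\ess(T)=\sigma_\ess(S)$, so $T\mapsto\sigma_\ess(T)$ is a Borel reduction of this equivalence relation to equality on $F(\R)$. Since $F(\R)$ is standard Borel, equality on it is smooth, and composing reductions shows that unitary equivalence modulo compact of bounded self-adjoint operators is smooth.
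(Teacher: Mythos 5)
This statement is quoted in the paper's introduction as background, attributed to Ando--Matsuzawa \cite{MR3427601}; the paper itself contains no proof of it, so there is nothing in-paper to compare against. Judged on its own, your argument is a correct and essentially complete proof sketch: the reduction of Effros-measurability to the sets $\{T:\sigma_\ess(T)\cap[c,d]\neq\emptyset\}$ for rational $c<d$ is right; the translation of $\sigma_\ess(T)\cap[c,d]\neq\emptyset$ into $0\in\sigma_\ess(g_{c,d}(T))$ via the spectral theorem is right (the converse direction using that $\sigma_\ess(T)$ is closed, so a compact interval disjoint from it meets only finitely many finite-multiplicity eigenvalues); the finitary Weyl criterion for a positive operator is correctly stated and correctly justified in both directions; and the quantifier elimination (openness of the witness condition in $(x_1,\ldots,x_N)$, rational-coordinate witnesses, Gram--Schmidt repair of near-orthonormality) is sound --- the fact that the Gram--Schmidt error is scaled by $\|g_{c,d}(T)\|$, which varies with $T$, is harmless because for each fixed $T$ you intersect over all rational $\epsilon$ and $\eta$. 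The final smoothness step via Weyl--von Neumann and the standardness of the Effros Borel space is also correct.

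One justification should be repaired, though it does not affect the result: you assert that the maps $T\mapsto\langle T^kx,x\rangle$ are \emph{weakly continuous}, but this is false for $k\geq 2$ (powers are not WOT-continuous, even on bounded sets; e.g.\ $T_n=S^n+(S^n)^*$ with $S$ the unilateral shift tends weakly to $0$ while $T_n^2$ does not). What is true, and is all you need, is that multiplication is jointly SOT-continuous on norm-bounded sets --- this is exactly the lemma recorded in \S 4 of the paper --- so $T\mapsto\langle T^kx,x\rangle$ is SOT-continuous on each ball $\{T:\|T\|\leq n\}$, and since these balls are Borel and exhaust $\LB(H)_{\sa}$, each such map is Borel. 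With that substitution your measurability step, and hence the whole argument, goes through.
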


In contrast, many natural equivalence relations on classes of operators are not smooth. In fact, they exhibit a very strong form of non-classifiability; they cannot be reduced to the isomorphism relation on any class of countable algebraic or relational structures, e.g., groups, rings, graphs, etc. Such equivalence relations are said to be \emph{not classifiable by countable structures}. The method used to exhibit this property is Hjorth's theory of turbulence \cite{MR1725642}. Relevant examples are given by:

\begin{nonumthm}[Kechris--Sofronidis \cite{MR1855842}]
	Unitary equivalence of self-adjoint (or unitary) operators is not classifiable by countable structures.
\end{nonumthm}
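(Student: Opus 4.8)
The plan is to factor the problem through the spectral theorem and then invoke Hjorth's turbulence machinery. Concretely, I would produce two Borel reductions: first, a reduction of the relation $\approx$ of \emph{mutual absolute continuity} (measure equivalence) on a space of finite Borel measures to unitary equivalence of self-adjoint operators; and second, a reduction of the orbit equivalence relation $E_{\ell^2}$ of a turbulent Polish group action to $\approx$. Since turbulence implies non-classifiability by countable structures (Hjorth), and since $E \leq_B F$ with $E$ not classifiable by countable structures forces $F$ not classifiable by countable structures, chaining the two reductions yields the theorem.

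For the first reduction, recall that a multiplication operator $M_\mu \colon f \mapsto (x \mapsto x f(x))$ on $L^2(\mu)$, for $\mu$ a finite Borel measure on $[0,1]$, is bounded self-adjoint and has the constant function as a cyclic vector, hence has simple spectrum. By the multiplicity theory of the spectral theorem, two self-adjoint operators with simple spectrum are unitarily equivalent if and only if their scalar spectral measures are mutually absolutely continuous; thus $M_\mu \iso M_\nu$ (unitarily) iff $\mu \approx \nu$. To make $\mu \mapsto M_\mu$ a Borel map into a fixed $\LB(H)_{\sa}$, I would restrict to non-atomic $\mu$ (so that $L^2(\mu)$ is separable and infinite-dimensional) and fix a Borel assignment $\mu \mapsto U_\mu$ of unitaries $L^2(\mu) \to H$ — for instance by Gram--Schmidt applied to a fixed countable dense subalgebra of continuous functions — and set $T_\mu = U_\mu M_\mu U_\mu^*$. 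Verifying Borelness of $\mu \mapsto T_\mu$ (in the weak or strong operator Borel structure) is routine once the coding is in place.

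For the second reduction, I would use Gaussian product measures. Map $a = (a_n) \in \R^{\N}$ to $\mu_a = \bigotimes_n N(a_n, 1)$, a Borel probability measure on $\R^{\N}$ (transported to $[0,1]$ via a Borel isomorphism if desired). By the Feldman--H\'{a}jek dichotomy for Gaussian measures (a special case of Kakutani's dichotomy for product measures), $\mu_a$ and $\mu_b$ are either equivalent or mutually singular, and they are equivalent precisely when $a - b \in \ell^2$. Hence $a \mapsto \mu_a$ Borel-reduces the orbit equivalence relation of the translation action $\ell^2 \curvearrowright \R^{\N}$ to $\approx$. Finally, I would invoke Hjorth's theorem that this translation action is turbulent: its orbits $a + \ell^2$ are dense (the finitely supported sequences are dense and lie in $\ell^2$) and meager (each $\ell^2$-ball is closed with empty interior in the product topology), and the local orbits are somewhere dense.

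The main obstacle is the turbulence verification itself — establishing that the local orbits of $\ell^2 \curvearrowright \R^{\N}$ are somewhere dense — which is the genuinely nontrivial analytic content, and I would lean on Hjorth's computation rather than redo it. A secondary technical point, easy to underestimate, is ensuring that the operator-valued map $\mu \mapsto T_\mu$ is actually Borel, which is entirely a matter of arranging the coding of the spaces $L^2(\mu)$ onto a single $H$ measurably in $\mu$. The unitary-operator version then follows either by repeating the argument with the spectral theorem for unitaries (spectral measures on $\T$ in place of $[0,1]$) or by applying the Cayley transform, which carries self-adjoint operators to unitaries and preserves unitary equivalence.
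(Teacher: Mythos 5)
The paper does not actually prove this statement: it is quoted purely as background, with the proof deferred to Kechris--Sofronidis \cite{MR1855842}, so there is no internal argument to compare yours against. That said, your proposal is correct, and it is essentially a reconstruction of the original Kechris--Sofronidis route: spectral multiplicity theory identifies unitary equivalence of simple-spectrum multiplication operators $M_\mu$ with mutual absolute continuity of the measures; a Kakutani-type dichotomy for product measures (for your Gaussian family the Hellinger affinity of the $n$th factors is $e^{-(a_n-b_n)^2/8}$, so equivalence holds iff $\sum_n(a_n-b_n)^2<\infty$) converts the $\ell^2$-translation relation into measure equivalence; and turbulence supplies non-classifiability. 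One point you treat as the ``genuinely nontrivial analytic content'' is in fact available off the shelf in this paper's own toolkit: $\ell^2$ is a proper, Polishable, strongly dense subgroup of $\R^\N$ (it contains all finitely supported sequences), so its translation action on $\R^\N$ is turbulent by the result quoted in Example \ref{transturb1} (Proposition 3.25 of \cite{MR1725642}), and Theorem \ref{turbthm} then applies; no local-orbit computation needs to be redone. Two details worth writing out if you formalize the argument: first, the Gram--Schmidt coding requires a fixed sequence that is linearly independent in $L^2(\mu)$ simultaneously for all non-atomic $\mu$ --- the monomials $1,x,x^2,\ldots$ work, since a nonzero polynomial vanishes only on a finite set and a non-atomic measure charges no finite set, and the resulting matrix entries of $T_\mu$ are integrals of polynomials against $\mu$, hence weak*-continuous in $\mu$, giving Borelness; second, your reduction lands only in the operators with simple spectrum, but this is harmless, since a Borel map into a subset of $\LB(H)_{\sa}$ satisfying the required equivalence is still a reduction to the full relation. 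The Cayley transform remark disposing of the unitary case is likewise correct.
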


\begin{nonumthm}[Ando--Matsuzawa \cite{MR3427601}]
	Unitary equivalence modulo compact of unbounded self-adjoint operators is not classifiable by countable structures.
\end{nonumthm}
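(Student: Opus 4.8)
The plan is to establish non-classifiability by the standard turbulence route: exhibit a turbulent Polish group action whose orbit equivalence relation Borel reduces to unitary equivalence modulo compact, and then invoke Hjorth's theorem together with the fact that non-classifiability by countable structures is inherited downward along Borel reductions. Throughout I read ``unitary equivalence modulo compact'' in the strong sense that $UTU^{*}-S$ is (the closure of) a compact operator, rather than the weaker condition on resolvents or Cayley transforms; the latter would only remember the essential spectrum and would render the relation smooth, as in the bounded case, so the strong reading is what carries the complexity. I first fix a standard Borel structure on the densely defined self-adjoint operators, say by identifying $T$ with its unitary Cayley transform $(T-i)(T+i)^{-1}$ or with the bounded resolvent $(T+i)^{-1}$, and then restrict attention to the Borel family of operators $D_{\lambda}$ that are diagonal, with eigenvalues $\lambda_{n}\to+\infty$, relative to a fixed orthonormal basis of $H$. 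Write $T\approx S$ for unitary equivalence modulo compact.

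The analytic heart is the characterization
\[
    D_{\lambda}\approx D_{\mu}\quad\Longleftrightarrow\quad \lambda^{\ast}_{n}-\mu^{\ast}_{n}\to 0,
\]
where $\lambda^{\ast},\mu^{\ast}$ denote the nondecreasing rearrangements. The direction $(\Leftarrow)$ is easy: reindexing by the sorting permutation, $D_{\lambda}$ becomes a diagonal perturbation of $D_{\mu}$ with entries $\lambda^{\ast}_{n}-\mu^{\ast}_{n}\to0$, hence compact. The direction $(\Rightarrow)$ is the main obstacle. From $UD_{\lambda}U^{*}-D_{\mu}=K$ with $K$ compact I must deduce that the sorted eigenvalues converge together. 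Here the crucial point, special to the unbounded setting where eigenvalues accumulate only at $+\infty$, is a rigidity of the eigenvalue list under compact perturbation: given $\eps>0$ write $K=F+R$ with $F$ finite rank and $\|R\|<\eps$; the Weyl/Ky Fan min--max inequalities control the effect of $R$ by $\eps$, while the finite-rank $F$ shifts the $n$-th eigenvalue only by $o(1)$ as $n\to\infty$, because the finitely many vectors defining $F$ lie in $\ell^{2}$ and hence have vanishing tails. The naive interlacing estimate, which merely displaces the index by $\rank F$, is not enough when the gaps of $(\mu^{\ast}_{n})$ do not shrink; one genuinely needs the $o(1)$ tail bound. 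Letting $\eps\to0$ then yields $\lambda^{\ast}_{n}-\mu^{\ast}_{n}\to0$.

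With the characterization in hand, $\lambda\mapsto D_{\lambda}$ is a Borel reduction of the relation $E$ on $\{\lambda:\lambda_{n}\to+\infty\}$ given by ``$\lambda^{\ast}-\mu^{\ast}\in c_{0}$'' to $\approx$, so it remains to prove that $E$ is not classifiable by countable structures. Now $E$ is exactly the orbit equivalence relation of the Polish group $c_{0}\rtimes S_{\infty}$ acting by coordinate permutation followed by translation: a genuine orbit equivalence relation, but one that quotients $\mathbb{E}_{c_{0}}$ by the permutations. I would finish by Borel reducing the translation action of $c_{0}$ on $\R^{\N}$---a turbulent action---into $E$. Concretely, encode $x\in\R^{\N}$ by the eigenvalue list consisting of $a_{n}+x_{n}^{+}$ and $b_{n}+x_{n}^{-}$ for a rigid, fast-increasing backbone $(a_{n}),(b_{n})$; splitting into positive and negative parts forces every eigenvalue to $+\infty$ as required, and $x\mapsto(x^{+},x^{-})$ is bi-Lipschitz, so it preserves $c_{0}$-equivalence.

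The second genuine obstacle lives here: I must choose the backbone so rigidly that any near-matching permutation witnessing $D_{f(x)}\approx D_{f(y)}$ is forced to be asymptotically the identity, so that sorted $c_{0}$-closeness of the lists returns honest coordinatewise $c_{0}$-closeness of $(x^{\pm})$, and hence $x\,\mathbb{E}_{c_{0}}\,y$; this is a rigidity-of-matchings combinatorial lemma taming the $S_{\infty}$ ambiguity. Granting it, together with the (citable) turbulence of $c_{0}\curvearrowright\R^{\N}$, Hjorth's theorem gives that $\mathbb{E}_{c_{0}}$ is not classifiable by countable structures, and the reductions of $\mathbb{E}_{c_{0}}$ to $E$ and of $E$ to $\approx$ transfer this to $\approx$. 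I expect the two hard points to be precisely the eigenvalue rigidity of the previous paragraph and this matching rigidity; the turbulence computation and the Borel measurability checks should be routine.
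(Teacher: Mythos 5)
First, a point of comparison: this paper does not prove the statement at all; it is quoted from Ando--Matsuzawa \cite{MR3427601} as background. So your attempt can only be measured against the cited proof and against the machinery this paper builds in \S 3 and \S 5. Your architecture is the right one, and matches the known proof in outline: restrict to diagonal operators $D_\lambda$ with eigenvalues tending to $+\infty$, prove the characterization $D_\lambda\approx D_\mu\Leftrightarrow \lambda^*-\mu^*\in c_0$, and transfer turbulence of a $c_0$-action across the map $\lambda\mapsto D_\lambda$. Your sketch of the analytic half is sound: for the finite-rank part one truncates the (finitely many) defining vectors to the span of the first $N$ eigenvectors of the unperturbed operator, which changes the sorted list only in finitely many positions and costs only a small norm error, and Weyl's inequality handles the rest; this is exactly the $o(1)$ rigidity you identify.

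The genuine gap is in your last step: the ``matching rigidity'' lemma you hope for is \emph{false}, for every choice of backbone. Fix any increasing backbone $(a_n)$, any $\delta>0$, and define $x,y\in\R^\N$ by
\begin{align*}
x_{2k}&=a_{2k+1}-a_{2k}+\delta, & x_{2k+1}&=0,\\
y_{2k}&=a_{2k+1}-a_{2k}, & y_{2k+1}&=\delta.
\end{align*}
All entries are nonnegative, so the negative-part slots of both encodings are the untouched backbone $(b_n)$, while the positive-part slots of $x$ list the values $a_{2k+1}+\delta$ (slot $2k$) and $a_{2k+1}$ (slot $2k+1$), and those of $y$ list $a_{2k+1}$ (slot $2k$) and $a_{2k+1}+\delta$ (slot $2k+1$): the \emph{same multiset}. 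Hence $D_{f(x)}$ and $D_{f(y)}$ are conjugate by a permutation unitary, so certainly equivalent modulo compact, yet $x_n-y_n=(-1)^n\delta$, so $x-y\notin c_0$. The failure is structural: because coordinates of $\R^\N$ are unbounded, a slot can imitate the backbone value of another slot, so no rigidity of matchings can be recovered; your map is not a reduction of $\R^\N/c_0$. The repair is easy and is what the cited proof effectively does: reduce from $[0,1]^\N/c_0$ instead (which is not classifiable by countable structures --- see \S 3 of this paper, or Lemma 3.1 of \cite{MR3228448}, or the bireducibility $\R^\N/c_0\equiv_B[0,1]^\N/c_0$ quoted there), via $x\mapsto D_{(2n+x_n)_n}$. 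Since $x_n\in[0,1]$, the sequence $(2n+x_n)_n$ is already sorted with gaps bounded below by $1$, so sorted difference equals coordinatewise difference, and your characterization immediately yields that this map is a Borel reduction; no combinatorial lemma about permutations is needed.
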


In this article, we present non-classification results for collections of operators, focusing on equivalence relations induced by ideals of compact operators. The paper is arranged as follows: 

In \S 2, we review the relevant theory of bounded operators and Borel equivalence relations. In \S 3, we describe Borel and Polish structures on collections of operators, and in \S 4, establish the first of our results:

\begin{thm}\label{thm1}
	\begin{enumerate}
		\item Equivalence modulo finite rank operators (on $\LB(H)$ and $\LK(H)$) is a Borel equivalence relation that is not Borel reducible to the orbit equivalence relation of any Polish group action.
		\item Equivalence modulo compact operators (on $\LB(H)$ and $\LB(H)_{\leq 1})$) is a Borel equivalence relation that is not classifiable by countable structures.
		\item Equivalence modulo Schatten $p$-class (on $\LB(H)$, $\LB(H)_{\leq 1}$, and $\LK(H)$) is a Borel equivalence relation that is not classifiable by countable structures.
	\end{enumerate}
\end{thm}

As a consequence of Theorem \ref{thm1}(a), we show:

\begin{cor}\label{cor1}
	The space of all finite rank operators on $H$ is not Polishable in either the norm or strong operator topologies.
\end{cor}

In \S 5, we restrict our attention to the projection operators $\LP(H)$, considering the restrictions of modulo finite rank and modulo compact. The latter provides an alternate view of the projections in the Calkin algebra. We improve upon Theorem \ref{thm1} parts (a) and (c), showing:

\begin{thm}\label{thm2}
	\begin{enumerate}
		\item Equivalence modulo finite rank on $\LP(H)$ is not Borel reducible to the orbit equivalence relation of any Polish group action.
		\item Equivalence modulo compact on $\LP(H)$ is not classifiable by countable structures.
	\end{enumerate}
\end{thm}

\section{Preliminaries}

\subsection{Bounded operators on Hilbert spaces} 

Throughout, we fix an infinite dimensional separable complex Hilbert space $H$, with inner product $\langle\cdot,\cdot\rangle$. Let $\LB(H)$ denotes the set of all bounded operators on $H$, with operator norm $\|\cdot\|$. A standard reference for the theory of $\LB(H)$ is \cite{MR971256}. 

The \emph{strong operator topology} is the topology induced by the family of seminorms $T\mapsto \|Tv\|$ for $v\in H$, while the \emph{weak operator topology} is induced by the family of seminorms $T\mapsto |\langle Tv, w\rangle|$ for $v,w\in H$. 

	We denote by $T^*$ the (Hermitian) adjoint of an operator $T\in \LB(H)$. An operator $T\in\LB(H)$ is \emph{self-adjoint} if $T=T^*$, and \emph{positive} if $\langle Tv,v\rangle \geq 0$ for all $v\in H$. To each operator $T\in\LB(H)$, there is a unique positive operator $|T|$ satisfying $|T|^2=T^*T$.
	
	An operator $P\in\LB(H)$ is a \emph{projection} if $P^2=P^*=P$. Equivalently, $P$ is the orthogonal projection onto a closed subspace (namely, $\ran(P)$) of $H$. Every projection is positive with $\|P\|=1$ whenever $P\neq 0$. We denote the set of projections by $\LP(H)$.
		
	An operator $T\in\LB(H)$ is \emph{compact} if the image of the closed unit ball of $H$ under $T$ has compact closure. The set of compact operators is denoted by $\LK(H)$. $T$ is \emph{finite rank} if $\rank(T)=\dim(\ran(T))<\infty$, and the set of finite rank operators is denoted by $\LB_f(H)$. It is well-known that an operator on $H$ is compact if and only if it is a norm limit of finite rank operators. The following characterizes which diagonal operators are compact:

\begin{prop}[3.3.5 in \cite{MR971256}]\label{diag_cpct}
	If $T\in\LB(H)$ is diagonal with respect to an orthonormal basis $\{e_n:n\in\N\}$, say $Tv=\sum_{n=0}^\infty \lambda_n \langle v,e_n\rangle e_n$ for all $v\in H$, then $T$ is compact if and only if $\lim_{n\to\infty}\lambda_n=0$.
\end{prop}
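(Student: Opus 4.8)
The plan is to prove the two implications separately, leaning on the fact recalled just above that an operator is compact precisely when it is a norm limit of finite rank operators.

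For the direction assuming $\lim_{n\to\infty}\lambda_n=0$, I would exhibit $T$ explicitly as a norm limit of finite rank operators. Define, for each $N\in\N$, the truncation $T_N v=\sum_{n=0}^{N}\lambda_n\langle v,e_n\rangle e_n$; each $T_N$ has range contained in $\linspan\{e_0,\dots,e_N\}$, hence is finite rank. The key computation is that $T-T_N$ is itself diagonal, with eigenvalues $\lambda_n$ for $n>N$ and $0$ otherwise, so that $\|(T-T_N)v\|^2=\sum_{n>N}|\lambda_n|^2|\langle v,e_n\rangle|^2\leq(\sup_{n>N}|\lambda_n|)^2\|v\|^2$, giving $\|T-T_N\|\leq\sup_{n>N}|\lambda_n|$. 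Since $\lambda_n\to 0$, the right-hand side tends to $0$ as $N\to\infty$, so $T$ is a norm limit of finite rank operators and therefore compact.

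For the converse, I would argue by contraposition. Suppose $\lambda_n\not\to 0$; then there is $\epsilon>0$ and a subsequence $(n_k)$ with $|\lambda_{n_k}|\geq\epsilon$ for all $k$. The basis vectors $e_{n_k}$ lie in the closed unit ball, and $Te_{n_k}=\lambda_{n_k}e_{n_k}$. Since the $e_{n_k}$ are pairwise orthogonal, for $j\neq k$ we have $\|Te_{n_j}-Te_{n_k}\|^2=|\lambda_{n_j}|^2+|\lambda_{n_k}|^2\geq 2\epsilon^2$. Thus $(Te_{n_k})$ is a sequence in the image of the closed unit ball no two of whose terms are within $\epsilon\sqrt{2}$ of one another, so it admits no convergent subsequence. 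This contradicts the compactness of that image, and hence of $T$.

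Since both directions rely on elementary estimates together with the stated characterization of compactness, I do not anticipate a serious obstacle. The only point requiring care is the norm identity $\|S\|=\sup_n|\mu_n|$ for a diagonal operator $S$ with eigenvalues $\mu_n$, which underlies the estimate in the first direction and which I would record explicitly before invoking it.
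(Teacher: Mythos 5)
Your proof is correct. Note that the paper does not prove this proposition at all---it is quoted as a background fact from Pedersen's \emph{Analysis Now} (3.3.5), so there is no internal argument to compare against; what you have written is the standard textbook proof. Both implications are sound: the truncation estimate $\|T-T_N\|\leq\sup_{n>N}|\lambda_n|$ follows from Bessel's inequality exactly as you compute, and in the converse the Pythagorean identity $\|Te_{n_j}-Te_{n_k}\|^2=|\lambda_{n_j}|^2+|\lambda_{n_k}|^2$ rules out any convergent subsequence of $(Te_{n_k})$, contradicting sequential compactness of the image of the closed unit ball (which is the definition of compactness the paper uses).
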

	
	It is easy to check that $\LK(H)$ is a norm-closed, self-adjoint ideal in $\LB(H)$, and the corresponding quotient $\LB(H)/\LK(H)$ is called the \emph{Calkin algebra}.

For $1\leq p<\infty$, the \emph{Schatten $p$-class} $\LB^p(H)$ is the set of all operators $T\in\LB(H)$ such that for some orthonormal basis $(e_n)$ of $H$, one has $\sum_{n=0}^\infty\langle |T|^p e_n,e_n\rangle <\infty$ (this quantity is independent of the choice of basis). Each $\LB^p(H)$ is a self-adjoint ideal in $\LB(H)$, which fails to be norm-closed, and $\LB_f(H)\subsetneq \LB^p(H)\subsetneq \LK(H)$.

The following facts will be relevant in the sequel. We caution that the adjoint operation $T\mapsto T^*$ is not strongly continuous, and multiplication is not jointly strongly continuous on all of $\LB(H)$. For these facts, and the following lemma, see \S4.6 in \cite{MR971256}.

\begin{lem}\label{ops_cts}
	\begin{enumerate}
		\item The adjoint operation is weakly continuous on $\LB(H)$.
		\item Multiplication of operators is strongly continuous when restricted to $B\times \LB(H)\to\LB(H)$, where $B$ is any norm bounded subset of $\LB(H)$.
	\end{enumerate}
\end{lem}

\begin{lem}\label{strongly_closed_sets}
	\begin{enumerate}
		\item The closed unit ball $\LB(H)_{\leq 1}$ is strongly closed and completely metrizable in $\LB(H)$.
		\item The set of all self-adjoint operators $\LB(H)_{sa}$ is strongly closed in $\LB(H)$.
		\item The set of positive operators is strongly closed in $\LB(H)$.
		\item The set of projections $\LP(H)$ is strongly closed in $\LB(H)_{\leq 1}$.
	\end{enumerate}
\end{lem}

\begin{proof}
	For (a), $\LB(H)_{\leq 1}$ is closed by an application of the uniform boundedness principle, and completely metrizable by 4.6.2 in \cite{MR971256}. By Lemma \ref{ops_cts}(a), $\LB(H)_{sa}$ is weakly, thus strongly, closed, showing (b). Part (c) follows from the fact that the maps $T\mapsto\langle Tv,v\rangle$, for $v\in H$, are strongly continuous. For (d), let $\LI=\{T\in\LB(H)_{\leq1}:T^2=T\}$, and $\LS=\{T\in\LB(H)_{\leq1}:T^*=T\}$. By Lemma \ref{ops_cts}, $\LI$ and $\LS$ are strongly closed in $\LB(H)_{\leq 1}$, and $\LP(H)=\LS\cap \LI$.
\end{proof}

\subsection{Borel equivalence relations}

A \emph{Polish space} is a separable and completely metrizable topological space, while a \emph{standard Borel space} is a set $X$ together with a $\sigma$-algebra of Borel sets coming from some Polish topology on $X$. An equivalence relation $E$ on $X$ is \emph{Borel} if $\{(x,y)\in X^2:xEy\}$ is a Borel subset of $X^2$. Given equivalence relations $E$ and $F$ on Polish (or standard Borel) spaces $X$ and $Y$, respectively, a map $f:X\to Y$ is a \emph{Borel reduction} of $E$ to $F$ if $f$ is Borel measurable, and 
\[
	xEy \quad\Leftrightarrow\quad f(x)Ff(y)
\]
for all $x,y\in X$. Equivalently, $f$ is a Borel map which descends to a well-defined injection $X/E\to Y/F$. In this case, we say that $E$ is \emph{Borel reducible} to $F$, and write $E\leq_B F$. If $f$ is injective, we say that $f$ is a \emph{Borel embedding} of $E$ into $F$, and write $E\sqsubseteq_B F$. If $E\leq_B F$ and $F\leq_B E$, we write $E\equiv_B F$, and say that $E$ and $F$ are \emph{Borel bireducible}. Intuitively, $E\leq_B F$ means that classifying elements of $Y$ up to $F$ is at least as complicated as classifying elements of $X$ up to $E$, as any classification of the former yields one for the latter. $E\equiv_B F$ means that they are of equal complexity.

\begin{example}
	If $X$ is a Polish space, we denote by $\Delta(X)$ the equality relation on $X$. $\Delta(X)$ is a closed, and thus Borel, subset of $X^2$.
\end{example}

\begin{example}
	Identifying $2=\{0,1\}$, with the discrete topology, the Borel equivalence relation $E_0$ is defined on $2^\N$ by
	\[
		(x_n)_nE_0(y_n)_n \quad\Leftrightarrow\quad \exists m\forall n\geq m(x_n=y_n).
	\]
\end{example}

\begin{example}
	The Borel equivalence relation $E_1$ is defined on $\R^\N$ by
	\[
		(x_n)_n E_1 (y_n)_n \quad\Leftrightarrow\quad \exists m\forall n\geq m(x_n=y_n).	
	\]
\end{example}

A Borel equivalence relation $E$ on a Polish space $X$ is \emph{smooth} if $E\leq_B\Delta(Y)$ for some Polish space $Y$. Smooth equivalence relations are exactly those which admit complete classification by real numbers. It is well-known that $E_0$ is not smooth (cf.~\S6.1 in \cite{MR2455198}).

A \emph{Polish group} $G$ is a topological group which has a Polish topology. If $X$ is a Polish space, and $G$ acts continuously on $X$, i.e., the map $G\times X\to X$ given by $(g,x)\mapsto g\cdot x$ is continuous, then we say that $X$ is a \emph{Polish $G$-space}, and denote by $E_G$ (or sometimes $E/G$) the orbit equivalence relation
\[
	x\,E_G\,y \quad\Leftrightarrow\quad \exists g\in G(g\cdot x=y).
\]

A group with a given Borel structure (e.g.,~a Borel subgroup of a Polish group) is \emph{Polishable} if it can be endowed with a Polish group topology having the same Borel structure. It is easy to check that the orbit equivalence relation induced by the translation action of a Polishable (or Borel) subgroup of a Polish group is Borel. The following shows that $E_1$ is an obstruction to classification by orbits of Polish group actions.

\begin{thm}[Kechris--Louveau \cite{MR1396895}]\label{E1}
	Let $G$ be a Polish group, and $X$ a Polish $G$-space. Then, $E_1\not\leq_B E^X_G$.
\end{thm}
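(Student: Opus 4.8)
The plan is to argue by contradiction: assume a Borel reduction $f\colon\R^\N\to X$ of $E_1$ to $E^X_G$ exists, and exploit the tension between the ``tail'' structure of $E_1$ and the rigidity of a single Polish group orbit. A convenient preliminary step is cosmetic: by a change-of-topology theorem, refine the Polish topology on $\R^\N$ (preserving its Borel structure, so that $E_1$ is the same relation) so as to make $f$ continuous; the action $G\times X\to X$ is already continuous by hypothesis. The governing observation is then that, since modifying a point of $\R^\N$ in only finitely many coordinates leaves its $E_1$-class unchanged, $f$ maps each $E_1$-class into a single $G$-orbit, whereas points in distinct $E_1$-classes land in distinct orbits.

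The contradiction I aim for is produced by a fusion/diagonalization. I would build points $z^{(n)}\in\R^\N$ so that $z^{(n)}$ and $z^{(n+1)}$ differ in exactly one new, increasingly late coordinate; then each $z^{(n)}$ differs from $z^{(0)}$ in only finitely many coordinates, so $z^{(n)}\mathrel{E_1}z^{(0)}$, while the coordinatewise limit $z=\lim_n z^{(n)}$ differs from $z^{(0)}$ in \emph{infinitely} many coordinates, whence $\neg\bigl(z\mathrel{E_1}z^{(0)}\bigr)$. Because $z^{(n)}\mathrel{E_1}z^{(0)}$, for each $n$ there is $g_n\in G$ with $f(z^{(n)})=g_n\cdot f(z^{(0)})$. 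If these transition elements could be arranged to converge to some $g\in G$, then continuity of the action gives $f(z^{(n)})=g_n\cdot f(z^{(0)})\to g\cdot f(z^{(0)})$, while continuity of $f$ gives $f(z^{(n)})\to f(z)$; hence $f(z)=g\cdot f(z^{(0)})$, so $f(z)\mathrel{E^X_G}f(z^{(0)})$. This contradicts $\neg\bigl(z\mathrel{E_1}z^{(0)}\bigr)$ and finishes the argument.

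The main obstacle---indeed essentially the whole content of the theorem---is forcing the transition elements $g_n$ to converge. Making $f(z^{(n+1)})$ close to $f(z^{(n)})$ in $X$ (which I can ensure by taking each one-coordinate modification small and invoking continuity of $f$) does \emph{not} by itself yield group elements near the identity, since the action need be neither free nor proper. Overcoming this requires the full Polish structure of $G$: working with a complete metric compatible with the topology of $G$ and carrying out a Baire-category analysis of the local structure of the orbit $G\cdot f(z^{(0)})$, i.e.\ of $G/G_{f(z^{(0)})}$, one selects along a comeager set of parameters transition elements whose partial products form a convergent infinite product. It is crucial here that $E_1$ admits modifications at arbitrarily late and mutually independent coordinates, giving the fusion ``infinitely many free directions''; by contrast $E_0$ is itself an orbit equivalence relation of a Polish (indeed countable) group action, so any correct proof must genuinely use this feature of $E_1$ and cannot persist if $E_1$ is replaced by $E_0$.
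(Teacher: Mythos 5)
Your overall framework (assume a reduction $f$, build an $E_1$-fusion $z^{(n)}\to z$, and contradict orbit membership of $f(z)$) is a reasonable opening move, but the proposal is not a proof: the step you yourself flag as ``essentially the whole content of the theorem''---forcing the transition elements $g_n$ to converge---is never carried out, and the strategy you indicate for filling it cannot work as aimed. The decisive objection is your own test case: the sketch applies verbatim to $E_0$ on $2^\N$. Binary sequences also admit modifications at arbitrarily late, mutually independent coordinates, so ``infinitely many free directions'' does not distinguish $E_1$ from $E_0$; yet $E_0$ \emph{is} the orbit equivalence relation of a Polish group action (the countable discrete group $\bigoplus_{n\in\N}\Z/2\Z$ acting on $2^\N$ by coordinatewise addition), so no argument of this shape can succeed unless it invokes something $E_0$ lacks. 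The feature actually needed is that each coordinate of $E_1$ ranges over an \emph{uncountable} Polish space, and the published proof of Kechris--Louveau exploits this quite differently: it is a global Baire-category argument (orbit equivalence relations carry a definable assignment of $\sigma$-ideals to their classes, i.e., they are ``idealistic,'' and $E_1$ cannot Borel reduce to such a relation), not a construction of convergent sequences of group elements. Note also that the paper under review gives no proof of this statement; it quotes it from \cite{MR1396895}, so your proposal must stand on its own, and it does not.

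There is a second, more technical flaw. After you refine the topology of $\R^\N$ to make $f$ continuous, your fusion produces $z^{(n)}\to z$ \emph{coordinatewise}, i.e., in the original product topology; continuity of $f$ holds only in the refined topology, and convergence in the coarser product topology does not imply convergence in the finer one. If instead you run the fusion inside the refined topology, then a point differing from $z^{(n)}$ in a single very late coordinate need no longer be close to $z^{(n)}$, so the construction of $z$ itself breaks down. Either way the inference $f(z^{(n)})\to f(z)$ is unjustified. And even granting it, orbits of Polish group actions are in general dense, meager, non-closed sets---exactly the situation in the turbulent actions appearing elsewhere in this paper---so limits of points lying in one orbit routinely escape that orbit; smallness of the steps from $f(z^{(n)})$ to $f(z^{(n+1)})$ puts no constraint on the $g_n$, because the orbit map $g\mapsto g\cdot f(z^{(0)})$ need not be open, proper, or injective. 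These are not gaps that can be patched locally; they are the reason the actual proof takes a different route.
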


The isomorphism relation on the class of countable structures of a first-order theory, e.g., groups, rings, graphs, etc, can be represented as the orbit equivalence relation of a Polish $G$-space (cf.~Ch.~11 of \cite{MR2455198}). If an equivalence relation is Borel reducible to such a relation, we say that it is \emph{classifiable by countable structures}. Hjorth \cite{MR1725642} isolated a dynamical property of Polish $G$-spaces, called \emph{turbulence}, which implies that the corresponding orbit equivalence relation resists such classification.
	
\begin{thm}[Hjorth \cite{MR1725642}]\label{turbthm}
	Let $X$ be a Polish $G$-space. If the action of $G$ is turbulent, then $E_G$ is not classifiable by countable structures.
\end{thm}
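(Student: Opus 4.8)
The plan is to first convert the hypothesis ``classifiable by countable structures'' into a statement about $S_\infty$-spaces, and then to derive a contradiction from turbulence via a genericity argument. By the Becker--Kechris representation theorem (Ch.~11 of \cite{MR2455198}), every Polish $S_\infty$-space is Borel isomorphic to a logic action on a space of countable structures, and isomorphism of such structures is exactly the induced orbit equivalence relation; consequently, ``$E_G$ is classifiable by countable structures'' is equivalent to ``$E_G\leq_B E^Y_{S_\infty}$ for some Polish $S_\infty$-space $Y$.'' So it suffices to rule out a Borel reduction $f:X\to Y$ of $E_G$ to $E^Y_{S_\infty}$. Assume, toward a contradiction, that such an $f$ exists. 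Being Borel, $f$ is Baire measurable, hence continuous on some comeager $G_\delta$ set $X^*\subseteq X$; I will work inside $X^*$. Note that, as a reduction, $f$ is in particular a homomorphism: $xE_Gy\Rightarrow f(x)\,E^Y_{S_\infty}\,f(y)$.

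The heart of the argument is the following genericity (``generic $S_\infty$-ergodicity'') claim: there is a single $E^Y_{S_\infty}$-class $C$ for which $f^{-1}(C)$ is comeager. To prove it I would exploit the defining feature of $S_\infty$, namely that the pointwise stabilizers $V_n=\{\sigma\in S_\infty:\sigma(i)=i\text{ for }i<n\}$ of finite initial segments form a neighborhood basis of the identity consisting of \emph{open} subgroups, so that each quotient $Y/V_n$ is a standard Borel space and each composite $x\mapsto V_n\cdot f(x)$ is Baire measurable. The strategy is a localization, level by level in $n$. Fixing $n$, a small open $U\subseteq X^*$, and a small symmetric neighborhood $W\subseteq G$ of the identity, one uses the continuity of $f$ together with the fact that nearby images lie in a common small orbit-piece to show that $V_n\cdot f(\cdot)$ is constant along the local orbit $\LO(x,U,W)$. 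Since $\LO(x,U,W)$ is somewhere dense (turbulence), continuity then forces $V_n\cdot f(\cdot)$ to be constant on a nonempty open subset of $X^*$; since every orbit is dense, this local constancy spreads to constancy of $V_n\cdot f(\cdot)$ on a comeager subset of $X^*$, for each fixed $n$. Finally, amalgamating the countably many levels by a back-and-forth across the tower $(V_n)_n$ (here using that $S_\infty$ is recovered from its actions on finite sets) upgrades ``$V_n\cdot f(\cdot)$ generically constant for every $n$'' to ``$S_\infty\cdot f(\cdot)$ generically constant,'' which is precisely the claim.

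Granting the claim, the contradiction is immediate: $B:=f^{-1}(C)$ is comeager, while every $E_G$-class is an orbit and hence meager by turbulence. A comeager set cannot be contained in a meager set (indeed, not even in a countable union of meager orbits), so $B$ contains two points $x,y$ lying in distinct orbits, i.e.\ $\neg(xE_Gy)$; but $f(x),f(y)\in C$ gives $f(x)\,E^Y_{S_\infty}\,f(y)$, contradicting that $f$ is a reduction.

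I expect the main obstacle to be the generic ergodicity claim, and within it the step asserting that a small turbulent motion of $x$ induces only a \emph{small} (i.e.\ $V_n$-bounded) motion of $f(x)$. The homomorphism property alone only guarantees that $f(x)$ and $f(x')$ share an $S_\infty$-orbit, with no control on the size of a witnessing permutation; obtaining such control generically requires an Effros-type analysis of the orbit maps $\sigma\mapsto\sigma\cdot f(x)$ (so that proximity in $Y$ is reflected by proximity in the quotient $S_\infty/\mathrm{Stab}(f(x))$), carefully calibrated against the continuity modulus of $f$ and the parameters $(U,W,n)$. It is exactly the open-subgroup neighborhood basis of $S_\infty$ that makes both this control and the final back-and-forth amalgamation possible, and its failure for general Polish $G$ is what confines the conclusion to classification by countable structures.
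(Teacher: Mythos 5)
The paper itself does not prove this theorem; it is quoted from Hjorth \cite{MR1725642}, so your proposal has to be measured against Hjorth's original argument. Your architecture does mirror it correctly: replace ``classifiable by countable structures'' by ``Borel reducible to $E^Y_{S_\infty}$ for a Polish $S_\infty$-space $Y$'' (Becker--Kechris), pass to a comeager set where $f$ is continuous, prove a generic $S_\infty$-ergodicity statement, and then derive the contradiction from the meagerness of orbits. Your final paragraph (a comeager set cannot lie in one meager orbit, so two inequivalent points map to one class) is correct as written.

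The genuine gap is that the entire content of the theorem is the generic ergodicity claim, and your justification of it is circular at the decisive point. You say one ``uses the continuity of $f$ together with the fact that nearby images lie in a common small orbit-piece to show that $V_n\cdot f(\cdot)$ is constant along the local orbit.'' But the homomorphism property only places $f(\LO(x,U,W))$ inside the single $S_\infty$-orbit of $f(x)$, which decomposes into \emph{countably many} pieces $V_n\sigma\cdot f(x)$; that the image stays in the one piece $V_n\cdot f(x)$ is exactly what has to be proven, and is where turbulence must do real work. Hjorth's proof gets this by a Baire-category pigeonhole over those countably many pieces together with an inductive back-and-forth that, for generic pairs $x,x'$, builds a single permutation $\sigma$ with $\sigma\cdot f(x)=f(x')$ as an increasing union of finite partial injections, invoking somewhere-density of local orbits at each extension step; your closing paragraph concedes that you do not know how to carry out precisely this step, so the proposal reduces the theorem to its key lemma rather than proving it. Two further inaccuracies: the quotients $Y/V_n$ are \emph{not} standard Borel spaces in general ($V_n$ is isomorphic to $S_\infty$ and its orbit equivalence relation can be as complicated as pointed-structure isomorphism, which is why the argument must be run with category quantifiers/Vaught transforms rather than with quotient-valued Borel maps); and the final ``amalgamation across the tower'' is not where any difficulty lies, since generic constancy of $x\mapsto V_n\cdot f(x)$ for a single $n\geq 1$ already yields the conclusion: $V_n\cdot f(x)=V_n\cdot f(x')$ implies $f(x')\in S_\infty\cdot f(x)$.
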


For our purposes, it suffices to consider examples of these actions, and we omit a detailed discussion of turbulence (see \cite{MR2455198}, \cite{MR1725642}, \cite{MR1967835}).

\begin{lem}[Proposition 3.25 in \cite{MR1725642}, and p. 35 in \cite{MR1967835}]\label{transturb}
	\begin{enumerate}
		\item The translation action of $c_0$ on $\R^\N$ is turbulent.
		\item For $1\leq p<\infty$, the translation actions of $\ell^p$ on $\R^\N$ and $c_0$ are turbulent.
	\end{enumerate}
	In particular, the orbit equivalence relations $\R^\N/c_0$, $\R^\N/\ell^p$, and $c_0/\ell^p$ are not classifiable by countable structures.
\end{lem}

We consider the restrictions of $\R^\N/c_0$ and $\R^\N/\ell^p$ to the subset $[0,1]^\N$, and denote them by $[0,1]^\N/c_0$, and $[0,1]^\N/\ell^p$, respectively. It is evident that these are Borel equivalence relations, and that $[0,1]^\N/c_0\sqsubseteq_B\R^\N/c_0$, and $[0,1]^\N/\ell^p\sqsubseteq_B\R^\N/\ell^p$, via the inclusion maps. Moreover:

\begin{lem}[Lemma 6.2.2 in \cite{MR2441635}, see also \cite{MR2213716}]\label{transturb_restr}
	\begin{enumerate}
		\item $\R^\N/c_0 \equiv_B [0,1]^\N/c_0$.
		\item For $1\leq p<\infty$, $\R^\N/\ell^p\equiv_B[0,1]^\N/\ell^p$.
	\end{enumerate}
	In particular, $[0,1]^\N/c_0$ and $[0,1]^\N/\ell^p$ are not classifiable by countable structures.\footnote{Recent work by Hartz and Lupini \cite{Hartz_Lupini_2015} has developed a general theory of \emph{turbulent Polish groupoids} in which this can be seen more directly.}
\end{lem}

\section{Topology and Borel structure on $\LB(H)$}

In order to study Borel equivalence relations on $\LB(H)$ or its subsets, we must endow them with a Polish or standard Borel structure. The norm topology on $\LB(H)$ (or $\LP(H)$) is \emph{not} Polish as it contains discrete subsets of size $2^{\aleph_0}$: given an orthonormal basis $\{e_n:n\in\N\}$, consider the family of projections $P_x$ onto $\bar{\linspan}\{e_n:n\in x\}$, for $x\subseteq\N$. Instead, we focus on the strong operator topology.

\begin{lem}\label{Polish_std_Borel}
	\begin{enumerate}
		\item $\LB(H)_{\leq 1}$ and $\LP(H)$ are Polish in the strong operator topology.
		\item $\LB(H)$ is a standard Borel space with respect to the Borel structure generated by the strong operator topology.
	\end{enumerate}
\end{lem}

\begin{proof}
	(a) follows from Lemma \ref{strongly_closed_sets} parts (a) and (d), and the fact that the strong operator topology is separable. For (b), note that a countable union of standard Borel spaces is standard Borel, and $\LB(H) = \bigcup_{n\geq 1} n\LB(H)_{\leq 1}$.
\end{proof}

All references to Borel subsets of (or functions on) $\LB(H)$ will be with respect to this Borel structure, which coincides with that of the weak operator topology (as closed convex sets in $\LB(H)$ are weakly closed if and only if they are strongly closed, Corollary 4.6.5 in \cite{MR971256}). We caution that $\LB(H)$ is not Polish in the strong operator topology (it is not metrizable, see E 4.6.4 in \cite{MR971256}), nor is it even Polishable as a group with this Borel structure (this follows from Lemma 9.3.3 in \cite{MR2455198}).

The equivalence relations we study below arise from the ideals $\LB_f(H)$, $\LK(H)$ and $\LB^p(H)$ for $1\leq p<\infty$, and thus we will need to show that the corresponding ideal is Borel in the relevant topology.

\begin{lem}\label{rankleqn}
	For each $n\in\N$, the set $\LF_{\leq n}=\{T\in\LB(H): \rank(T)\leq n\}$ is strongly closed in $\LB(H)$.
\end{lem}

\begin{proof}\footnote{We thank the anonymous referee for a much shortened proof of this fact.}
	Suppose that $T\in\LB(H)$ is such that $\rank(T)>n$. There are vectors $v_0,\ldots,v_n\in H$ such that $Tv_0,\ldots,Tv_n$ are linearly independent, or equivalently, their Gram determinant $\det(\langle Tv_i,Tv_j\rangle_{i,j})$ is nonzero. Since the Gram determinant is continuous, there is a strongly open neighborhood of $T$ in $\LB(H)$ such that for all $S$ in that neighborhood, the Gram determinant $\det(\langle Sv_i,Sv_j\rangle_{i,j})$ is also nonzero, and so $\rank(S)>n$. Thus, the complement of $\LF_{\leq n}$ is strongly open.
\end{proof}

\begin{prop}\label{B_f(H)&K(H)Borel}
	$\LB_f(H)$ is an $F_\sigma$ set, and $\LK(H)$ is an $F_{\sigma\delta}$ set, in the strong operator topology on $\LB(H)$.
\end{prop}

\begin{proof}
	The claim for $\LB_f(H)$ is an immediate consequence of Lemma \ref{rankleqn}. The proof for the claim regarding $\LK(H)$ is essentially that of the more general Theorem 3.1~in \cite{MR0487448}. Let $\{T_k\}_{k=1}^\infty$ be a norm-dense sequence in $\LK(\LH)$, and let $B=\LB(\LH)_{\leq 1}$. Then,
	\[
		\LK(\LH) = \bigcap_{n=1}^\infty \left(\LK(\LH)+\frac{1}{n}B\right)\supseteq\bigcap_{n=1}^\infty\bigcup_{k=1}^\infty\left(T_k+\frac{1}{n}B\right) \supseteq \LK(\LH),
	\]
	where the first equality is the result of $\LK(H)$ being norm-closed in $\LB(H)$. Since $B$ is strongly closed, this shows that $\LK(\LH)$ is $F_{\sigma\delta}$.
\end{proof}

\begin{lem}[cf.~p.~48 of \cite{MR0463941}]\label{borelfns}
	If $f:\R\to\R$ is a bounded Borel function, then the map $\LB(H)_{sa}\to\LB(H)_{sa}$ given by $T\mapsto f(T)$ is Borel.
\end{lem}

\begin{proof}
	Let $(p_n)_n$ be a sequence of real polynomials converging to $f$ pointwise, which are uniformly bounded on compact sets. It follows by basic spectral theory that, for $T\in\LB(H)_{sa}$, $p_n(T)$ converges to $f(T)$ weakly. Thus, the map in questions is a pointwise (weak) limit of Borel functions, by Lemma \ref{ops_cts}, and hence Borel.
\end{proof}

\begin{lem}\label{cpct_Sch_p_Polish}
	$\LK(H)$ and $\LB^p(H)$, for $1\leq p<\infty$, are Polish spaces. In fact, they are separable Banach spaces when considered with the operator norm and $p$-norm, respectively.
\end{lem}

\begin{proof}
	It suffices to verify separability, which follows from the fact that each of the spaces considered contains $\LB_f(H)$ as a dense subset.
\end{proof}

\begin{prop}\label{B^p(H)Borel}
	For each $1\leq p<\infty$, $\LB^p(H)$ is a Polishable subspace of $\LK(H)$ in the norm topology, and a Borel subset of $\LB(H)$ in the strong operator topology.
\end{prop}

\begin{proof}
	Fix $1\leq p<\infty$. By Lemma \ref{cpct_Sch_p_Polish}, $\LB^p(H)$ is a separable Banach space under the $p$-norm. To prove that it is Polishable in $\LK(H)$, it suffices to verify that the Borel structures in both topologies coincide. This follows from the fact that $\|T\|\leq\|T\|_p$ for $T\in\LB^p(H)$, showing that the inclusion map $\LB^p(H)\to\LK(H)$ is a continuous injection.

For the second claim, the map $T\mapsto(T^*T)^{p/2}=|T|^p$ is Borel by Lemmas \ref{ops_cts} and \ref{borelfns}, and if $\{e_n:n\in\N\}$ is a fixed orthonormal basis for $H$, then $T\in\LB^p(H)$ if and only if there is an $M$, such that for all $N$, $\sum_{n=0}^N\langle|T|^p e_n,e_n\rangle<M$. Thus, $\LB^p(H)$ is Borel.
\end{proof}

\section{Equivalence relations in $\LB(H)$}

As per Lemmas \ref{Polish_std_Borel} and \ref{cpct_Sch_p_Polish}, $\LB(H)$ will be considered as a standard Borel space with the Borel structure induced by the strong operator topology, $\LB(H)_{\leq 1}$ a Polish space with the strong operator topology, and $\LK(H)$ a Polish space with the norm topology. We will consider the equivalence relations on $\LB(H)$, and their restrictions to $\LB(H)_{\leq 1}$ and $\LK(H)$, induced by the ideals $\LB_f(H)$, $\LK(H)$ and $\LB^p(H)$ for $1\leq p<\infty$, denoted (and named) as follows:
\begin{align*}
	T \equiv_f S &\quad\Leftrightarrow\quad T-S\in\LB_f(H) \quad\text{(\emph{modulo finite rank})}\\
	T \equiv_\ess S &\quad\Leftrightarrow\quad T-S\in\LK(H)  \hspace{-9pt}\quad\text{(\emph{modulo compact} or \emph{essential equivalence})}\\
	T \equiv_p S &\quad\Leftrightarrow\quad T-S\in\LB^p(H) \quad\text{(\emph{modulo $p$-class)}}, \text{ for $1\leq p<\infty$}.
\end{align*}

Fix an orthonormal basis $\{e_n:n\in\N\}$ for $H$ for the remainder of this section. Consider the map $\ell^\infty\to\LB(H)$ given by $\alpha\mapsto T_\alpha$, where $T_\alpha v = \sum_{n=0}^\infty \alpha_{n}\langle v,e_n\rangle e_n$, for $\alpha=(\alpha_n)_n\in\ell^\infty$ and $v\in H$.

\begin{lem}\label{cts_lemma}%"Lemma X" as per ref report 2
	\begin{enumerate}
		\item The map $\alpha\mapsto T_\alpha$ is an isometric embedding $\ell^\infty\to\LB(H)$, with respect to the usual norms on these spaces, and maps $c_0$ into $\LK(H)$.
		\item The map $\alpha\mapsto T_\alpha$ is continuous $[0,1]^\N\to\LB(H)$, when $[0,1]^\N$ is endowed with the product topology, and $\LB(H)$ with the strong operator topology. Its range is contained within $\LB(H)_{\leq1}$.
	\end{enumerate}
\end{lem}

\begin{proof}
	(a) This map is the well-known isometric embedding of $\ell^\infty$ as diagonal multiplication operators on $H$ (see 4.7.6 in \cite{MR971256}). That it maps $c_0$ into $\LK(H)$ is a restatement of Proposition \ref{diag_cpct}.
	
	\noindent(b) Fix $\alpha\in[0,1]^\N$ and let $U=\{T\in\LB(H):\|(T-T_\alpha)v\|<\eps\}$, a subbasic open neighborhood of $T_\alpha$ in the strong operator topology, where $v=\sum_{n=0}^\infty a_ne_n$ and $\eps>0$. Pick $m$ such that $\sum_{n=m+1}^\infty|a_n|^2<\eps^2/2$, and let 
	\[
		V=\left\{\beta\in [0,1]^\N:\sum_{n=0}^m|\beta_n-\alpha_n|^2|a_n|^2<\eps^2/2\right\}.
	\]
	$V$ is an open neighborhood of $\alpha$ in $[0,1]^\N$. If $\beta\in V$, then
	\[
		\|(T_\beta-T_\alpha)v\|^2=\sum_{n=0}^m|\beta_n-\alpha_n|^2|a_n|^2+\sum_{n=m+1}^\infty|\beta_n-\alpha_n|^2|a_n|^2<\eps^2,
	\]
	showing that $T_\beta\in U$. It follows that the map is continuous.
\end{proof}

We can now complete the proof of Theorem \ref{thm1}.

\begin{proof}[Proof of Theorem \ref{thm1}.]
	By Propositions \ref{B_f(H)&K(H)Borel} and \ref{B^p(H)Borel}, each of the equivalence relations under consideration is Borel in the relevant spaces. We will use restrictions of the map $\alpha\mapsto T_\alpha$ to different domains, which are continuous injections in all relevant cases by Lemma \ref{cts_lemma}.
	
	\noindent(a) Let $X=\prod_{n=0}^\infty [0,\frac{1}{n+1}]$, and consider the equivalence relation $E$:
\[
	\alpha E \beta \quad\Leftrightarrow\quad \exists m\forall n\geq m(\alpha_n=\beta_n)
\]
for $\alpha,\beta\in X$. This can be identified (up to Borel brieducibility) with $E_1$. We use the restriction of the map $\alpha\mapsto T_\alpha$ to $X$. By Lemma \ref{cts_lemma}(a), it maps into $\LK(H)$. Moreover, $T_\alpha-T_\beta$ is of finite rank if and only if $\alpha E_1\beta$. Thus, $E_1\sqsubseteq_B\;\equiv_f$ on $\LK(H)$ or $\LB(H)$, and the result follows by Theorem \ref{E1}.

\noindent(b) We use the map the restriction of the map $\alpha\mapsto T_\alpha$ to $[0,1]^\N$, which maps into $\LB(H)_{\leq 1}$ by Lemma \ref{cts_lemma}(b). Suppose that $\alpha,\beta\in[0,1]^\N$, then $(T_\alpha-T_\beta)v=\sum_{n=0}^\infty(\alpha_n-\beta_n)\langle v,e_n\rangle e_n$, for $v\in H$. By Proposition \ref{diag_cpct}, $T_\alpha-T_\beta$ is compact if and only if $\alpha-\beta\in c_0$, showing $[0,1]^\N/c_0\sqsubseteq_B\;\equiv_\ess$ on $\LB(H)_{\leq 1}$ or $\LB(H)$. The result follows by Lemma \ref{transturb_restr}(a).
	
\noindent(c) We again use the restriction of $\alpha\to T_\alpha$ to $[0,1]^\N$. Fix $1\leq p<\infty$. Suppose that $\alpha,\beta\in[0,1]^\N$. For $x\in H$, we have that $|T_\beta-T_\alpha|^px=\sum_{n=0}^\infty|\beta_n-\alpha_n|^p\langle x,e_n\rangle e_n$, and so, $\sum_{n=0}^\infty \langle|T_\alpha-T_\beta|^pe_n,e_n\rangle = \sum_{n=0}^\infty|\beta_n-\alpha_n|^p$. Thus, $\alpha-\beta\in\ell^p$ if and only if $T_\alpha-T_\beta\in\LB^p(H)$, showing $[0,1]^\N/\ell^p\sqsubseteq_B\;\equiv_p$ on $\LB(H)_{\leq 1}$ or $\LB(H)$. Similarly, for the restriction to $\LK(H)$, we use the restriction of the map $\alpha\mapsto T_\alpha$ to $c_0$, and obtain $c_0/\ell^p\sqsubseteq_B\;\equiv_p$ on $\LK(H)$. The results follow as in (b), using Lemma \ref{transturb_restr}(b) in the $[0,1]^\N/\ell^p$ case, and Lemma \ref{transturb}(a) in $c_0/\ell^p$ case.
\end{proof}

\begin{proof}[Proof of Corollary \ref{cor1}.]
	If $\LB_f(H)$ was Polishable in either topology, then its translation action on $\LK(H)$ would be a Polish group action, contrary to Theorem \ref{thm1}(a).
\end{proof}

\section{Equivalence relations in $\LP(H)$}

Recall that $\LP(H)$ is the set of projections in $\LB(H)$, a Polish space in the strong operator topology by Lemma \ref{Polish_std_Borel}. Fix an orthonormal basis $\{e_n:n\in\N\}$ for $H$ throughout this section. For each $x\subseteq\N$, let $P_x$ be the projection onto the subspace $\bar{\linspan}\{e_n:n\in x\}$. Then, for $v\in H$, $P_xv =\sum_{n\in x}\langle v,e_n\rangle e_n$. The map $x\mapsto P_x$ is called the \emph{diagonal embedding} (with respect to this basis), and is the restriction to $2^\N$ of the map $\alpha\mapsto T_\alpha$ from \S 4.

\subsection{Equivalence modulo finite rank}

There are two natural ways to define equivalence modulo \emph{finite rank} or \emph{finite dimension} on $\LP(H)$. One could simply restrict $\equiv_f$ to $\LP(H)$, or one could say that $P\equiv_{fd}Q$ if there exist finite dimensional subspaces $U$ and $V$ of $H$ such that $\ran(P)\subseteq\ran(Q)+U$ and $\ran(Q)\subseteq\ran(P)+V$. In fact, these notions coincide. We will use the fact that if $V$ is a closed subspace of $H$ and $F$ a finite dimensional subspace of $H$, then $V+F$ is closed (E 2.1.4 in \cite{MR971256}).

\begin{prop}
	Let $P,Q\in\LP(H)$. The following are equivalent:
	\begin{enumerate}[\upshape (i)]
		\item $P\equiv_{fd}Q$.
		\item There exist finite dimensional subspaces $W\subseteq\ran(P)^\perp$ and $Y\subseteq\ran(Q)^\perp$ such that $\ran(P)+W=\ran(Q)+Y$.
		\item $P\equiv_f Q$.
	\end{enumerate}
\end{prop}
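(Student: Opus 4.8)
The plan is to prove the implications (iii) $\Rightarrow$ (i) $\Rightarrow$ (ii) $\Rightarrow$ (iii), working throughout with the closed subspaces $R=\ran(P)$ and $S=\ran(Q)$. The implication (iii) $\Rightarrow$ (i) is immediate: if $P-Q\in\LB_f(H)$, set $U=V=\ran(P-Q)$, which is finite dimensional. For $v\in R$ we have $v=Pv=Qv+(P-Q)v\in S+U$, so $R\subseteq S+U$; applying the same computation with the roles of $P$ and $Q$ reversed (using $\ran(Q-P)=\ran(P-Q)$) gives $S\subseteq R+V$, which is (i).

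The substance of the argument is (i) $\Rightarrow$ (ii). Assuming $R\subseteq S+U$ and $S\subseteq R+V$ with $U,V$ finite dimensional, I would first show that $M:=R+S$ is a closed subspace that is a finite-dimensional extension of both $R$ and $S$. Since $0\in U$, we have $R+S\subseteq S+U$, and $S+U$ is closed because it is the sum of a closed subspace and a finite dimensional one. The quotient $(R+S)/S$ embeds into the finite dimensional space $(S+U)/S$, so it is finite dimensional; lifting a basis yields a finite dimensional $F$ with $R+S=S+F$, whence $M=R+S$ is closed (again, closed plus finite dimensional) with $\dim(M/S)<\infty$, and symmetrically $\dim(M/R)<\infty$. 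Now set $W=M\cap R^\perp$ and $Y=M\cap S^\perp$. As the orthogonal complements of $R$ and $S$ inside the Hilbert space $M$, these are finite dimensional (their dimensions equal $\dim(M/R)$ and $\dim(M/S)$), and they give orthogonal decompositions $M=R\oplus W=S\oplus Y$. Thus $R+W=M=S+Y$ with $W\subseteq R^\perp$ and $Y\subseteq S^\perp$, which is exactly (ii).

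Finally, for (ii) $\Rightarrow$ (iii), let $P_M$, $P_W$, $P_Y$ denote the orthogonal projections onto $M=R+W=S+Y$, $W$, and $Y$. Because $W\subseteq R^\perp$ and $Y\subseteq S^\perp$, the decompositions $M=R\oplus W=S\oplus Y$ are orthogonal, so $P_M=P+P_W=Q+P_Y$. Subtracting gives $P-Q=P_Y-P_W$, a difference of finite rank projections, hence $P-Q\in\LB_f(H)$, i.e., (iii). The only delicate point in the whole argument is the closedness of $M=R+S$ together with the finiteness of $\dim(M/R)$ and $\dim(M/S)$ in the step (i) $\Rightarrow$ (ii); this is precisely where the cited fact that the sum of a closed subspace and a finite dimensional subspace is closed is needed, and it is what lets us pass from the asymmetric containments of (i) to the clean common extension $M$ used in (ii) and (iii).
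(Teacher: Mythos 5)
Your proof is correct, and it follows the same overall cycle of implications as the paper; in particular your (ii) $\Rightarrow$ (iii) step --- writing $P_M=P+P_W=Q+P_Y$ for the orthogonal decompositions $M=\ran(P)\oplus W=\ran(Q)\oplus Y$ and subtracting --- is essentially identical to the paper's. The differences lie in the other two implications. For (iii) $\Rightarrow$ (i) you argue pointwise ($v=Qv+(P-Q)v$ for $v\in\ran(P)$), which is a touch cleaner than the paper's route via $\ran(P)=\bar{\ran(Q+A)}\subseteq\bar{\ran(Q)+\ran(A)}=\ran(Q)+\ran(A)$. For the key step (i) $\Rightarrow$ (ii), the paper works with the common extension $\ran(P)+U+V$ and builds $W$ concretely: each basis vector of $U$ and $V$ is split as $p+p^\perp$ with $p\in\ran(P)$ and $p^\perp\in\ran(P)^\perp$, and $W$ is the span of the finitely many $p^\perp$'s (similarly for $Y$). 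You instead take the canonical common extension $M=R+S$ (with $R=\ran(P)$, $S=\ran(Q)$), show it is closed and of finite codimension over each range by embedding the algebraic quotients $(R+S)/S$ and $(R+S)/R$ into $(S+U)/S$ and $(R+V)/R$, and then define $W=M\cap R^\perp$ and $Y=M\cap S^\perp$ as orthogonal complements inside the Hilbert space $M$. Your version is more structural and coordinate-free --- it identifies the natural object $R+S$ and gets $W$, $Y$ for free from Hilbert space geometry --- while the paper's is more elementary, needing only the decomposition of finitely many explicit vectors; both arguments hinge on the same cited fact that the sum of a closed subspace and a finite dimensional subspace is closed.
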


\begin{proof}
	(i) $\Rightarrow$ (ii): Let $U$ and $V$ witness $P\equiv_{fd}Q$ as in the definition. Let $W$ and $Y$ be the images of $U$ and $V$ under orthogonal projections onto $\ran(P)^\perp$ and to $\ran(Q)^\perp$, respectively. Then, $\ran(P)+U=\ran(P)+W$ and $\ran(Q)+V=\ran(Q)+Y$.
		
	\noindent(ii) $\Rightarrow$ (iii): For $W$ and $Y$ as in (ii), let $R$ be the projection onto $W$ and $R'$ the projection onto $Y$. Since $W$ is orthogonal to $\ran(P)$, $P+R$ is the projection onto $\ran(P)+W$. Likewise $Q+R'$ is the projection onto $\ran(Q)+Y$. Thus, $P+R=Q+R'$, and so $P-Q=R'-R$, a finite rank operator.
	
	\noindent(iii) $\Rightarrow$ (i): Suppose that $P-Q=A$ where $A\in\LB_f(A)$. Then,
	\[
		\ran(P) = \ran(Q+A)\subseteq\ran(Q)+\ran(A)=\ran(Q)+\ran(A),
	\]
	and likewise,
	\[
		\ran(Q)=\ran(P-A)\subseteq\ran(P)+\ran(A)=\ran(P)+\ran(A).
	\]
	Since $\ran(A)$ is finite-dimensional, it follows that $P\equiv_{fd}Q$.
\end{proof}

Consequently, we will use $\equiv_f$ for this (Borel, by Proposition \ref{B_f(H)&K(H)Borel}) equivalence relation on $\LP(H)$. It is easy to see that the diagonal embedding witnesses the non-smoothness of $\equiv_f$ on $\LP(H)$.

To show that $\equiv_f$ restricted to $\LP(H)$ is of higher complexity, we define a new map $[0,1]^\N\to\LP(H)$ given by $\alpha\mapsto P_\alpha$ as follows: For each $\alpha=(\alpha_n)_n\in[0,1]^\N$, let $P_\alpha$ be the projection onto $\bar{\linspan}\{e_{2n}+\alpha_ne_{2n+1}:n\in\N\}$.

\begin{lem}\label{P_alphacts}
	The map $[0,1]^\N\to\LP(H)$ given by $\alpha\mapsto P_\alpha$ is a continuous injection.
\end{lem}

\begin{proof}
	First we show that $\alpha\mapsto P_\alpha$ is injective. Let $\alpha,\beta\in[0,1]^\N$ with $\alpha\neq\beta$, so $\alpha_k\neq\beta_k$ for some $k$. In order to show that $P_\alpha\neq P_\beta$, it suffices to show that $P_\alpha(e_{2k}+\beta_ke_{2k+1})\neq e_{2k}+\beta_ke_{2k+1}=P_\beta(e_{2k}+\beta_ke_{2k+1})$. Note that
	\[
		P_\alpha(e_{2k}+\beta_ke_{2k+1})=\frac{1+\alpha_k\beta_k}{1+\alpha_k^2}(e_{2k}+\alpha_ke_{2k+1}).
	\]
	By linear independence of $e_{2k}$ and $e_{2k+1}$, the right hand side of the displayed equation is equal to the input on the left hand side if and only if $\alpha_k=\beta_k$. Thus, $P_\alpha\neq P_\beta$.

To see that the map is continuous,\footnote{We thank the anonymous referee for a much shortened proof of this fact.} for each $n\in\N$ and $\alpha\in[0,1]^\N$, let $P_{n,\alpha}$ be the projection of $H$ onto $\linspan\{e_{2n}+\alpha_n e_{2n+1}\}$. It is clear that for each $n$, the map $[0,1]^\N\to\LP(H)$ given by $\alpha\mapsto P_{n,\alpha}$ is strongly continuous, and $P_\alpha=\bigoplus_{n\in\N}P_{n,\alpha}$. To see that $\alpha\mapsto P_\alpha$ is strongly continuous, let $\alpha_k\to\alpha$ in $[0,1]^\N$, and $v$ be a unit vector. By density and the fact that $\|P_{n,\alpha}\|\leq 1$ for all $n$ and $\alpha$, it suffices to consider $v$ in the (algebraic) direct sum $\bigoplus_n\linspan\{e_{2n},e_{2n+1}\}$, in which case $\|(P_{\alpha_k}-P_{\alpha})v\|\to 0$ follows from the strong continuity of each of the factors $P_{n,\alpha}$.
\end{proof}

For $\alpha\in[0,1]^\N$, the vectors $\frac{1}{\sqrt{1+\alpha_n^2}}(e_{2n}+\alpha_ne_{2n+1})$, $n\in\N$, form an orthonormal basis for $\ran(P_\alpha)$. Thus, we can write,
\begin{align*}
	P_\alpha v %&=\sum_{n=0}^\infty \left\langle \sum_{k=0}^\infty a_ke_k,\frac{1}{\sqrt{1+\alpha_n^2}}(e_{2n}+\alpha_ne_{2n+1}) \right\rangle\frac{1}{\sqrt{1+\alpha_n^2}}(e_{2n}+\alpha_ne_{2n+1})\\
		&=\sum_{n=0}^\infty \frac{a_{2n}+a_{2n+1}\alpha_n}{1+\alpha_n^2}(e_{2n}+\alpha_ne_{2n+1}),
\end{align*}
and
\begin{align*}
		(P_\alpha-P_\beta)v&=\sum_{n=0}^\infty\left[ \frac{a_{2n}+a_{2n+1}\alpha_n}{1+\alpha_n^2}-\frac{a_{2n}+a_{2n+1}\beta_n}{1+\beta_n^2} \right]e_{2n}\\
		&\quad + \sum_{n=0}^\infty\left[\frac{a_{2n}\alpha_n+a_{2n+1}\alpha_n^2}{1+\alpha_n^2}-\frac{a_{2n}\beta_n+a_{2n+1}\beta_n^2}{1+\beta_n^2}\right]e_{2n+1},
	\end{align*}
for $\alpha,\beta\in[0,1]^\N$ and $v=\sum_{n=0}^\infty a_ne_n\in H$. 

Since we must consider the difference $P_\alpha-P_\beta$ several times in what follows, it will be useful to have it in a canonical form. Denote by $T_0$, $T_1$, $T_2$ and $T_3$ the diagonal operators
	\begin{align*}
		T_0v&=\sum_{n=0}^\infty\left[\frac{1}{1+\alpha_n^2}-\frac{1}{1+\beta_n^2}\right]a_{2n}e_{2n},\\
		T_1v&=\sum_{n=0}^\infty\left[\frac{\alpha_n}{1+\alpha_n^2}-\frac{\beta_n}{1+\beta_n^2}\right]a_{2n+1}e_{2n+1},\\
		T_2v&=\sum_{n=0}^\infty\left[\frac{\alpha_n}{1+\alpha_n^2}-\frac{\beta_n}{1+\beta_n^2}\right]a_{2n}e_{2n},\\
		T_3v&=\sum_{n=0}^\infty\left[\frac{\alpha_n^2}{1+\alpha_n^2}-\frac{\beta_n^2}{1+\beta_n^2}\right]a_{2n+1}e_{2n+1},
	\end{align*}
	and by $S_0$ and $S_1$ the operators
	\[
		S_0v=\sum_{n=0}^\infty a_{2n+1}e_{2n} \quad\text{and}\quad S_1v=\sum_{n=0}^\infty a_{2n}e_{2n+1},
	\]
	for $v=\sum_{n=0}^\infty a_ne_n$. Each of the aforementioned operators is bounded, and by collecting terms, one can show that
	\begin{align}\label{diff_projs}
		P_\alpha-P_\beta=T_0+S_0T_1+S_1T_2+T_3.
	\end{align}
	
	We can now prove Theorem \ref{thm2}(a).

\begin{proof}[Proof of Theorem \ref{thm2}(a).]\footnote{The author is indebted to Ilijas Farah for suggesting this result and ideas of its proof.}
	By Lemma \ref{P_alphacts}, the map $\alpha\mapsto P_\alpha$ is a continuous injection. Represent $E_1$ on $[0,1]^\N$ by $\alpha E_1 \beta \Leftrightarrow \exists m\forall n\geq m(\alpha_n=\beta_n)$.
	As above, for $\alpha,\beta\in[0,1]^\N$, we have the representation $P_\alpha-P_\beta=T_0+S_0T_1+S_1T_2+T_3$. Clearly, if $\alpha E_1\beta$, then all but finitely many of the coefficients (which are independent of $v$) $\left[\frac{1}{1+\alpha_n^2}-\frac{1}{1+\beta_n^2}\right]$, $\left[\frac{\alpha_n}{1+\alpha_n^2}-\frac{\beta_n}{1+\beta_n^2}\right]$ and $\left[\frac{\alpha_n^2}{1+\alpha_n^2}-\frac{\beta_n^2}{1+\beta_n^2}\right]$ will be $0$, showing that $P_\alpha-P_\beta$ has finite rank.
	
	Conversely, suppose that $P_\alpha-P_\beta$ has finite rank. It follows that the operator $T=T_0+S_0T_1$, given by
	\[
		Tv=\sum_{n=0}^\infty\left[\frac{1}{1+\alpha_n^2}-\frac{1}{1+\beta_n^2}\right]a_{2n}e_{2n} + \sum_{n=0}^\infty\left[\frac{\alpha_n}{1+\alpha_n^2}-\frac{\beta_n}{1+\beta_n^2}\right]a_{2n+1}e_{2n}
	\]
	for $v=\sum_{n=0}^\infty a_ne_n$, is of finite rank. Using vectors of the form $\sum_{n=0}^\infty a_{2n}e_{2n}$ and $\sum_{n=0}^\infty a_{2n+1}e_{2n+1}$ it is easy to see that in order for $T$ to be finite rank, all but finitely many of the terms $\left[\frac{1}{1+\alpha_n^2}-\frac{1}{1+\beta_n^2}\right]$, and $\left[\frac{\alpha_n}{1+\alpha_n^2}-\frac{\beta_n}{1+\beta_n^2}\right]$ are $0$. Since $\alpha_n\geq 0$ and $\beta_n\geq 0$, $\frac{1}{1+\alpha_n^2}-\frac{1}{1+\beta_n^2} = 0$ if and only if $\alpha_n=\beta_n$. Thus, $\alpha E_1\beta$, and so $E_1\sqsubseteq_B\;\equiv_f$ on $\LP(H)$. The result follows by Theorem \ref{E1}.
\end{proof}

\subsection{Essential equivalence}

The last equivalence relation we wish to study is the restriction of $\equiv_\ess$ to $\LP(H)$. The quotient of $\LP(H)$ by this relation can be identified with the set of projections in Calkin algebra $\LB(H)/\LK(H)$, by Proposition 3.1 in \cite{MR2300900}.

We note that, although a projection is compact if and only if it is of finite rank, this is not true of the difference of two projections. In particular, $\equiv_\ess$ does not coincide with $\equiv_f$ on $\LP(H)$. However, as before, the diagonal embedding witnesses the non-smoothness of $\equiv_\ess$.

To prove Theorem \ref{thm2}(b), we will again use the map $\alpha\mapsto P_\alpha$ used to prove Theorem \ref{thm2}(a).

\begin{proof}[Proof of Theorem \ref{thm2}(b).]
	We claim that the map $\alpha\mapsto P_\alpha$ is a reduction of $[0,1]^\N/c_0$ to $\equiv_\ess$, from which the result will follow by Lemma \ref{transturb_restr}. Let $\alpha,\beta\in[0,1]^\N$, and suppose that $\alpha-\beta\in c_0$. We will use the representation of $P_\alpha-P_\beta$ in equation (\ref{diff_projs}). By Proposition \ref{diag_cpct}, and the inequalities
	\begin{align*}
		\left|\frac{1}{1+\alpha_n^2}-\frac{1}{1+\beta_n^2} \right|&=\left|\frac{\beta_n^2-\alpha_n^2}{(1+\alpha_n^2)(1+\beta_n^2)}\right|\leq |\beta_n-\alpha_n||\beta_n+\alpha_n|,\\
		\left|\frac{\alpha_n}{1+\alpha_n^2}-\frac{\beta_n}{1+\beta_n^2}\right| &= \left|\frac{\alpha_n+\alpha_n\beta_n^2-\beta_n-\alpha_n^2\beta_n}{(1+\alpha_n^2)(1+\beta_n^2)}\right|\\ &\leq |\alpha_n-\beta_n|+|\alpha_n||\beta_n-\alpha_n||\beta_n|,\\
		\left|\frac{\alpha_n^2}{1+\alpha_n^2}-\frac{\beta_n^2}{1+\beta_n^2}\right|&=\left|\frac{\alpha_n^2-\beta_n^2}{(1+\alpha_n^2)(1+\beta_n^2)}\right|\leq|\beta_n-\alpha_n||\beta_n+\alpha_n|,
	\end{align*}
	we have that $T_0$, $T_1$, $T_2$ and $T_3$ are compact. Since the compact operators form an ideal, $S_0T_1$ and $S_1T_2$ are also compact, and thus so is $P_\alpha-P_\beta$.
	
	Conversely, suppose that $P_\alpha-P_\beta$ is compact. We will use that if an operator is compact, then it is weak--norm continuous on the closed unit ball of $H$ (3.3.3 in \cite{MR971256}). Since the sequence $e_m$ converges \emph{weakly} to $0$ as $m\to\infty$, it follows that $(P_\alpha-P_\beta)e_{2m}$ and $(P_\alpha-P_\beta)e_{2m+1}$ converge \emph{in norm} to $0$. Observe that
	\begin{align*}
		(P_\alpha-P_\beta)e_{2m}&=\left[\frac{1}{1+\alpha_m^2}-\frac{1}{1+\beta_m^2}\right]e_{2m} + \left[\frac{\alpha_m}{1+\alpha_m^2}-\frac{\beta_m}{1+\beta_m^2}\right]e_{2m+1},\\
		(P_\alpha-P_\beta)e_{2m+1}&=\left[\frac{\alpha_m}{1+\alpha_m^2}-\frac{\beta_m}{1+\beta_m^2}\right]e_{2m} + \left[\frac{\alpha_m^2}{1+\alpha_m^2}-\frac{\beta_m^2}{1+\beta_m^2}\right]e_{2m+1},
	\end{align*}
	and so the coefficients on the right hand side of these identities converge to $0$ as $m\to\infty$.
%	\begin{align*}
%		\|(P_\alpha-P_\beta)e_{2m}\|^2&=\left|\frac{1}{1+\alpha_m^2}-\frac{1}{1+\beta_m^2}\right|^2 + \left|\frac{\alpha_m}{1+\alpha_m^2}-\frac{\beta_m}{1+\beta_m^2}\right|^2,\\
%		\|(P_\alpha-P_\beta)e_{2m+1}\|^2&=\left|\frac{\alpha_m}{1+\alpha_m^2}-\frac{\beta_m}{1+\beta_m^2}\right|^2 + \left|\frac{\alpha_m^2}{1+\alpha_m^2}-\frac{\beta_m^2}{1+\beta_m^2}\right|^2
%	\end{align*}
%	and both converge to $0$ as $m\to\infty$. 
	Using the inequalities
	\begin{align*}
		\left|\frac{1}{1+\alpha_m^2}-\frac{1}{1+\beta_m^2}\right|&=\left|\frac{\beta_m^2-\alpha_m^2}{(1+\alpha_m^2)(1+\beta_m^2)}\right|\geq \frac{1}{4}|\alpha_m-\beta_m||\alpha_m+\beta_m|,\\
		\left|\frac{\alpha_m}{1+\alpha_m^2}-\frac{\beta_m}{1+\beta_m^2}\right|&=\left|\frac{\alpha_m+\alpha_m\beta_m^2-\beta_m-\alpha_m^2\beta_m}{(1+\alpha_m^2)(1+\beta_m^2)}\right|\\&\geq\frac{1}{4}|\alpha_m-\beta_m||1-\alpha_m\beta_m|,
	\end{align*}
	the quantities on the right must also converge to $0$. For any $m$, since $\alpha_m,\beta_m\in[0,1]$, we have that
%$\alpha_m+\beta_m\geq\sqrt{2\alpha_m\beta_m}\geq \alpha_m\beta_m$ and so
%	\[
%		|\alpha_m+\beta_m|+|1-\alpha_m\beta_m|=\alpha_m+\beta_m+1-\alpha_m\beta_m\geq 1.
%	\]
%	Thus,
	\[
		|\alpha_m-\beta_m||\alpha_m+\beta_m|+|\alpha_m-\beta_m||1-\alpha_m\beta_m|\geq|\alpha_m-\beta_m|,
	\]
	and so $\alpha_m-\beta_m$ converges to $0$, as claimed.
\end{proof}

\section{Further questions}

We have seen in the proof of Theorem \ref{thm1} that the equivalence relations $[0,1]^\N/c_0$ and $[0,1]^\N/\ell^p$ are Borel reducible to $\equiv_\ess$ and $\equiv_p$ for $1\leq p<\infty$, respectively. We may think of $\equiv_\ess$ and $\equiv_p$ as non-commutative analogues of $\R^\N/c_0$ and $\R^\N/\ell^p$, and ask whether they are of the same complexity:

\begin{ques}
	Are the equivalence relations $\equiv_\ess$ and $\equiv_p$ on $\LB(H)$ (or $\LP(H)$) Borel reducible to $\R^\N/c_0$ and $\R^\N/\ell^p$ for $1\leq p<\infty$, respectively?
\end{ques}

The Weyl--von Neumann theorem and the work of Ando--Matsuzawa \cite{MR3427601} show that unitary equivalence modulo compact on bounded self-adjoint operators is smooth. The refinement of this given by unitary equivalence modulo Schatten $p$-class has also been studied; see \cite{MR0420323}. We ask:

\begin{ques}
	What is the Borel complexity of unitary equivalence modulo Schatten $p$-class? Is it smooth? Is it classifiable by countable structures?
\end{ques}

\subsection*{Acknowledgements}
The author would like to thank Clinton Conley, Ilijas Farah, Justin Moore, Camil Muscalu, Melanie Stam and Nik Weaver for helpful conversations and correspondences which contributed to this work, and the anonymous referee for many helpful comments which vastly improved the presentation these results. The author was partially supported by NSERC grant PGSD2-453779-2014 and NSF grant DMS-1262019.

%\bibliography{/Users/iian/Dropbox/Grad_School/math_bib}{}
%\bibliographystyle{abbrv}

\end{document}